\documentclass[a4paper,12pt]{article}
\usepackage[english]{babel}
\usepackage{amssymb,amsmath,amsthm}
\usepackage{enumitem}
\usepackage{booktabs,xcolor}
\usepackage{tikz,graphicx,color}
\usepackage{subcaption}
\usepackage{bm}
\usepackage{xurl}
\usepackage{authblk}
\usepackage{hyperref}
\usepackage{aliascnt}
\usepackage[nameinlink]{cleveref}
\usepackage[hmargin=2.3cm,vmargin=2.36cm]{geometry}
\usepackage[square,sort,comma,numbers]{natbib}

\allowdisplaybreaks

\hypersetup{
    colorlinks=true,
    linkcolor=black,
    citecolor=black,
    urlcolor=black
}

\theoremstyle{plain}
\newtheorem{theorem}{Theorem}[section]
\newaliascnt{lemma}{theorem}
\newtheorem{lemma}[lemma]{Lemma}
\aliascntresetthe{lemma}

\newaliascnt{corollary}{theorem}
\newtheorem{corollary}[corollary]{Corollary}
\aliascntresetthe{corollary}
\newaliascnt{proposition}{theorem}

\aliascntresetthe{proposition}
\newaliascnt{claim}{theorem}
\newtheorem{claim}[claim]{Claim}
\aliascntresetthe{claim}

\newaliascnt{conjecture}{theorem}
\newtheorem{conjecture}[conjecture]{Conjecture}
\aliascntresetthe{conjecture}
\newaliascnt{observation}{theorem}
\newtheorem{observation}[observation]{Observation}
\aliascntresetthe{observation}

\theoremstyle{definition}
\newaliascnt{definition}{theorem}

\aliascntresetthe{definition}

\theoremstyle{remark}
\newaliascnt{remark}{theorem}

\aliascntresetthe{remark}

\numberwithin{equation}{section}
\crefname{theorem}{Theorem}{Theorems}
\Crefname{theorem}{Theorem}{Theorems}
\crefname{lemma}{Lemma}{Lemmas}
\Crefname{lemma}{Lemma}{Lemmas}
\crefname{corollary}{Corollary}{Corollaries}
\Crefname{corollary}{Corollary}{Corollaries}
\crefname{proposition}{Proposition}{Propositions}
\Crefname{proposition}{Proposition}{Propositions}
\crefname{claim}{Claim}{Claims}
\Crefname{claim}{Claim}{Claims}
\crefname{conjecture}{Conjecture}{Conjectures}
\Crefname{conjecture}{Conjecture}{Conjectures}
\crefname{definition}{Definition}{Definitions}
\Crefname{definition}{Definition}{Definitions}
\crefname{remark}{Remark}{Remarks}
\Crefname{remark}{Remark}{Remarks}
\Crefname{observation}{Observation}{Observations}

\newcommand{\abs}[1]{\lvert#1\rvert}

\newlength{\bibitemsep}
\newlength{\bibparskip}
\let\oldthebibliography\thebibliography
\renewcommand\thebibliography[1]{%
  \oldthebibliography{#1}%
  \setlength{\parskip}{\bibitemsep}%
  \setlength{\itemsep}{\bibparskip}%
}

\newenvironment{proof*}[1][Proof]
{\begin{proof}[#1]}
{\end{proof}}

\definecolor{myblue}{rgb}{0.0627, 0.7686, 0.9882}

\begin{document}

\title{Frustration index of a signed planar graph \\ and the feedback vertex set}

\author{Sirui Chen} 

\author{Jiaao Li}

\author{Zhouningxin Wang}

\affil{\small School of Mathematical Sciences and LPMC, Nankai University, Tianjin 300071, China. \linebreak Emails: sirui.chen@mail.nankai.edu.cn, \{lijiaao, wangzhou\}@nankai.edu.cn.}

\date{}

\maketitle

\begin{abstract}
A feedback vertex set of a graph is a set of vertices whose deletion leaves a forest. In 2016, Dross, Montassier, and Pinlou conjectured that every planar graph $G$ of girth at least $g$ admits a feedback vertex set of size at most $e(G)/g$. In this note, we confirm this conjecture by connecting this problem with signed graphs. The frustration index of a signed graph $(G,\Sigma)$ is defined as the minimum number of negative edges among all signatures on $G$ that are switching-equivalent to $\Sigma$. Equivalently, it is the minimum number of edges whose deletion results in a balanced subgraph of $(G,\Sigma)$. We show that the minimum size of a feedback vertex set of a planar graph is bounded above by the maximum frustration index over all signatures of the graph, and thereby provide a tight upper bound on the size of the minimum feedback vertex set, which resolves the conjecture of Dross, Montassier, and Pinlou (2016).
\end{abstract}

\noindent
{\bf Keywords}: Frustration index, feedback vertex set, planar graph, $T$-join, ear decomposition.

\section{Introduction}

A \emph{feedback vertex set} of a graph $G$ is a set $S\subseteq V(G)$ such that $G-S$ is a forest. Let $$\mathrm{fvs}(G) = \min \{|S|:S\subseteq V(G),~S~\text{is a feedback vertex set of}~G\}.$$
For a given graph $G$ and an integer $k$, deciding whether $\mathrm{fvs}(G)\le k$ is NP-complete \cite{LY1980}. For any graph $G$, we use $v(G)$ and $e(G)$ to denote its number of vertices and edges, respectively. Various results concern structural upper bounds on $\mathrm{fvs}(G)$ for different graph classes. For planar graphs, a well-known conjecture of Albertson and Berman~\cite{AB1979} states that every planar graph $G$ has a feedback vertex set of size at most $\frac{v(G)}{2}$.

\begin{conjecture}[Albertson and Berman~\cite{AB1979}]\label{conj:planar}
Every planar graph $G$ satisfies $\mathrm{fvs}(G)\leq \frac{v(G)}{2}$.
\end{conjecture}

This conjecture, if true, would be tight as witnessed by $K_4$. Despite considerable effort, the best known general upper bound is $\mathrm{fvs}(G) \leq \frac{3v(G)}{5}$, a consequence of Borodin's result~\cite{B1979} that every planar graph admits an acyclic $5$-coloring. For triangle-free planar graphs, further improvements were obtained by Salavatipour \cite{S2006}, proving that $\mathrm{fvs}(G)\le \frac{15v(G)-24}{32}$, and Dross, Montassier, and Pinlou \cite{DMP2019} showed that $\mathrm{fvs}(G)\le \frac{5v(G)-7}{11}$, and Le~\cite{L2018} improved it to $\mathrm{fvs}(G)\le \frac{4v(G)}{9}$. In the setting of triangle‑free planar graphs and even more so for bipartite planar graphs, the following conjecture naturally arises independently by Akiyama and Watanabe\cite{AW1987}, and Albertson and Hass.

\begin{conjecture}[Akiyama and Watanabe~\cite{AW1987}, and Albertson and Hass~\cite{AH1998}]\label{conj:bipartite}
Every bipartite planar graph $G$ satisfies that $\mathrm{fvs}(G)\leq \frac{3v(G)}{8}$.
\end{conjecture}

For bipartite graphs, Alon \cite{A2003} gave an upper bound $\mathrm{fvs}(G)\leq \left(\frac{1}{2}-e^{-bd^2}\right)v(G)$, where $b$ is a constant and $d$ is the upper bound of the average degree of $G$. This bound was subsequently improved by Conlon, Fox, and Sudakov \cite{CFS2014} to $\mathrm{fvs}(G)\le\left(\frac{1}{2} -(2^7d^2)^{-4d}\right)v(G)$. Meanwhile, Alon, Mubayi, and Thomas \cite{AMT2001} proved that every triangle-free graph $G$ satisfies $\mathrm{fvs}(G)\le \frac{e(G)}{4}$. The current best bound for \Cref{conj:bipartite} is $\mathrm{fvs}(G)\leq\frac{3v(G)-3}{7}$, due to Wang, Xie, and Yu \cite{WXY2017}.

More generally, planar graphs of large girth are also considered. For a connected planar graph $G$ of girth at least $g$, it is easy to see that $\mathrm{fvs}(G)<\frac{2e(G)}{g}$. Dross, Montassier, and Pinlou~\cite{DMP2016} provided a non-trivial bound of $\mathrm{fvs}(G)\le \frac{4e(G)}{3g}$, and proposed the following conjecture.

\begin{conjecture}[Dross, Montassier, and Pinlou~\cite{DMP2016}]\label{conj:dmp}
Every planar graph $G$ of girth at least $g$ satisfies $\mathrm{fvs}(G)\leq \frac{e(G)}{g}$.
\end{conjecture}

A corresponding vertex-based bound of $\mathrm{fvs}(G)\le \frac{v(G)-2}{g-2}$ can be derived from Euler's formula for planar graphs together with the girth condition, that is, $e(G)\leq \frac{g(v(G)-2)}{g-2}$.  
\Cref{conj:dmp} was subsequently confirmed in the case $g=5$ by Kelly and Liu~\cite{KL2017}. More precisely, they proved that every connected planar graph of girth at least $5$ satisfies $\mathrm{fvs}(G)\le \frac{2e(G)-v(G)+2}{7},$
which implies both $\mathrm{fvs}(G)\le \frac{e(G)}{5}$ and $\mathrm{fvs}(G)\le \frac{v(G)-2}{3}$. Tang and Diao~\cite{TD2025} proved that every connected outerplanar graph $G$ of girth $g$ satisfies
$\mathrm{fvs}(G)\leq \frac{e(G)}{g},$
with equality if and only if $G$ is a cycle of length $g$. Ma and Ren~\cite{MR2025} proved that for $g\ge 8$, if a planar graph $G$ has girth at least $2(g-4)^2+6$, then $\mathrm{fvs}(G)\leq \frac{e(G)}{g}$; they also established the following bounds: $\mathrm{fvs}(G)\leq \frac{3v(G)-6}{8}$ for planar graphs of girth at least $6$, and $\mathrm{fvs}(G)\leq \frac{3v(G)-6}{10}$
for planar graphs of girth at least $8$. 
Most recently, Dreyer, Pinlou, and Valicov~\cite{DPV2026} investigated an analogue of the problem for planar digraphs with prescribed digirth, and obtained the upper bound $\mathrm{fvs}(D)\leq \frac{v(D)-2}{g-2}$ for every planar digraph $D$ of digirth $g\geq 3$.

\medskip
In this note, we confirm \Cref{conj:dmp}. The main result is stated as follows.

\begin{theorem}\label{thm:main}
Let $G$ be a planar graph of girth at least $g$ and with $b$ bridges. Then $$\mathrm{fvs}(G)\leq \left\lfloor \frac{e(G)-b}{g}\right\rfloor.$$ 
\end{theorem}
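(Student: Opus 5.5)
The plan has two independent parts, matching the strategy announced in the abstract. Write $\ell(G,\Sigma)$ for the frustration index and $\ell^*(G)=\max_\Sigma \ell(G,\Sigma)$. Part one is the structural inequality $\mathrm{fvs}(G)\le \ell^*(G)$ for planar $G$. Part two is the extremal bound $\ell^*(G)\le \lfloor (e(G)-b)/g\rfloor$ for planar $G$ of girth at least $g$. Combining them gives \Cref{thm:main}.

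First I reduce to the $2$-edge-connected case. Bridges lie on no cycle, so deleting them changes neither $\mathrm{fvs}$ nor any $\ell(G,\Sigma)$. Both quantities are additive over blocks, or at least subadditive: a feedback vertex set of a graph can be taken as the union of feedback vertex sets of its blocks. Hence it suffices to treat a $2$-connected planar $G$ with $b=0$. The floor is automatic since $\mathrm{fvs}$ is an integer.

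\emph{Part two (frustration bound).} Fix a plane embedding and pass to the dual $G^*$. Switching classes of $\Sigma$ are determined by the set $T$ of unbalanced faces, i.e.\ faces whose boundary has an odd number of negative edges. Here $|T|$ is even, and $\ell(G,\Sigma)$ equals the minimum size of a $T$-join in $G^*$. Girth at least $g$ in $G$ means every edge cut of $G^*$ has size at least $g$, so $G^*$ is $g$-edge-connected. It then suffices to show that in a $g$-edge-connected graph $H$ with any even set $T$, some $T$-join has at most $e(H)/g$ edges. For $|T|=2$ this follows from Menger: there are $g$ edge-disjoint $T$-paths, and the shortest has length at most $e(H)/g$. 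For general $T$ I would argue by induction along an ear decomposition of $H$, or equivalently of $G$, keeping track of the parity demands. Alternatively, I would take a fractional packing of $T$-cuts: each $T$-cut is an edge cut, hence has at least $g$ edges. By the Edmonds--Johnson/Seymour duality, a minimum $T$-join equals a maximum (half-integral) packing of $T$-cuts, and a packing of $\nu$ disjoint cuts each of size $\ge g$ uses at least $g\nu$ edges. I need the integral (or suitably rounded) version for the dual of a planar graph. Seymour's theorem gives exact packing for bipartite graphs, and I would try to exploit that $T$-joins in planar duals correspond to balancing sets to get away with the half-integral version. Making this rigorous for arbitrary $T$ is, I expect, the main obstacle.

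\emph{Part one ($\mathrm{fvs}\le \ell^*$).} I would build the feedback vertex set inductively over an ear decomposition $C_0,P_1,\dots,P_k$ of the $2$-connected $G$. Simultaneously I would build a signature $\Sigma$ whose minimum balancing set has size at least the number of vertices chosen. Each time an ear closes a new cycle that is not yet hit by the current vertex set $S$, I would sign one edge of the ear negatively so that the newly created face becomes unbalanced. Then I add an endpoint of that ear to $S$. Planarity enters via duality once more: a balancing set of $\Sigma$ corresponds to a $T$-join in $G^*$ for the set $T$ of unbalanced faces. I would show that the faces made unbalanced in this way force $|S|$ disjoint requirements on any $T$-join. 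This could be done via an acyclic (tree-like) structure of the ears that added vertices, giving a lower bound $\ell(G,\Sigma)\ge |S|$. I must also check that $G-S$ is a forest at the end, which holds because every cycle is generated by ears and each unhit ear-cycle triggered an addition. Together with Part two this yields $\mathrm{fvs}(G)\le\ell(G,\Sigma)\le \ell^*(G)\le \lfloor(e(G)-b)/g\rfloor$.
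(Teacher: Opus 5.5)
\emph{Part one has a genuine gap.} This is the heart of the theorem, and your proposal does not contain the idea that makes $\mathrm{fvs}(G)\le F_{\max}(G)$ true. Your greedy construction does make $G-S$ a forest, since an endpoint of an open ear meets every cycle through that ear. But the inequality $\ell(G,\Sigma)\ge |S|$ is left unproved, and as formulated it fails.

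First, it depends on choices you never specify. In the wheel $W_n$ with hub $h$, take $C_0=hv_1v_2$ with $v_1\in S$, then the ears $v_iv_{i+1}h$ for $2\le i\le n-1$, and finally $v_nv_1$. Each ear $v_iv_{i+1}h$ closes the unhit triangle $hv_iv_{i+1}$. If you add the endpoint $v_i$ each time, you end with $|S|=n-1$, whereas your own Part two gives $\ell^*(W_n)\le 2n/3<n-1$ for $n\ge 4$.

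Second, and more fundamentally, the certificate you propose cannot work even with optimal choices. You want $|S|$ pairwise disjoint requirements on a $T$-join, i.e.\ edge-disjoint $T$-cuts of $G^*$, or equivalently edge-disjoint even subgraphs of $G$ each with an odd number of negative edges. Already $K_4$ rules this out: $\mathrm{fvs}(K_4)=2$, but $K_4$ has no two edge-disjoint cycles, so every such packing has size at most $1$. The value $F(K_4,\Sigma)=2$ for the all-triangles-negative signature is certified only fractionally, by the four triangles each with weight $\frac12$.

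The paper never constructs a signature. It uses Frank's min--max theorem $\varphi(G^*)=2\mu(G^*)-v(G^*)+1$ together with $\mu(G^*)=F_{\max}(G)$, which combines Kwan's characterization of minimum $T$-joins with the $T$-join description of frustration in the dual. It then takes an ear decomposition of the \emph{dual} $G^*$ with exactly $\varphi(G^*)$ even ears. In $G$, each ear becomes a vertex splitting plus parallel edges between the two new vertices. Deleting one such edge for each even ear gives $D$ with $|D|=\varphi(G^*)$. It also gives a vertex ordering of $G-D$ in which every vertex has an odd number of edges back to earlier vertices. Hence $G-D$ has a spanning tree whose complement $R$ splits into pairs of incident edges. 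Choosing one vertex per edge of $D$ and one common endpoint per pair gives a feedback vertex set of size at most $|D|+|R|/2=\frac{v(G^*)-1+\varphi(G^*)}{2}=\mu(G^*)=F_{\max}(G)$. Some min--max input of this kind is exactly what your Part one lacks.

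\emph{Part two is fine.} It is essentially the paper's argument, and the obstacle you anticipate does not exist, because no integral or rounded packing is needed. By the Edmonds--Johnson theorem, $\tau_{G^*}(T)$ equals the optimum of the linear program with constraints $x(\delta_{G^*}(U))\ge 1$ for all $U$ with $|U\cap T|$ odd. Every nonempty cut of $G^*$ has at least $g$ edges, so $x\equiv 1/g$ is feasible and $\tau_{G^*}(T)\le e(G)/g$. Dually, any fractional or half-integral packing $y$ of $T$-cuts satisfies $g\sum_C y_C\le\sum_C y_C|C|\le e(G^*)$, which is precisely your counting. Your reduction to blocks is also fine.
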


We remark that the bound is tight. Indeed, for every integer $g$, the cycle $C_g$ of length $g$ attains equality in the bound $\frac{e(C_g)}{g}$.

A closely related question in signed graph theory, that of making a signed graph negative-cycle-free, corresponds to the concept of the frustration index. A signed graph $(G, \Sigma)$ is an underlying graph $G$ assigned with a negative edge set $\Sigma\subseteq E(G)$. We call $\Sigma$ a \emph{signature} of $(G, \Sigma)$. The \emph{switching} operation of $(G, \Sigma)$ at an edge cut $X$ is to change the edge set $\Sigma$ into $\Sigma \Delta X$ (that is the symmetric difference of $\Sigma$ and $X$). Two signatures $\Sigma$ and $\Sigma'$ on $G$ are \emph{switching equivalent} if one can be obtained from the other by switching at some edge cut. The \emph{frustration index} of $(G, \Sigma)$, denoted by $F(G, \Sigma)$, is defined to be the minimum number of negative edges in any switching-equivalent signature, that is,
$$F(G, \Sigma) = \min\limits_{\Sigma'} \{|\Sigma'|: \Sigma' ~\text{is switching equivalent to}~\Sigma\},$$ and, equivalently, the frustration index of $(G, \Sigma)$ is the minimum number of edges whose deletion results in a balanced subgraph of $(G,\Sigma)$. Moreover, we denote the \emph{maximum frustration index} of a graph $G$ by $$F_{\max}(G) = \max\limits_\Sigma \{F(G, \Sigma):  \Sigma \text{~is a signature on~}G\}.$$

The minimum size of a feedback vertex set and the (maximum) frustration index measure two different types of obstructions in a graph. The former is the minimum number of vertices whose deletion results in a forest, while the latter is the minimum number of edges whose deletion results in a balanced graph. In general, these two parameters are not comparable; for example, the Wagner graph $H$ in~\Cref{fig:twisted cube} satisfies $\mathrm{fvs}(H)>F_{\max}(H)$, whereas a $2k$-wheel $W_{2k}$ satisfies $\mathrm{fvs}(W_{2k})=2$ and $F_{\max}(W_{2k})=k$, and thus $\mathrm{fvs}(W_{2k})<F_{\max}(W_{2k})$ for $k>2$. 
\begin{figure}[htbp]
    \centering

    \begin{subfigure}[t]{0.46\textwidth}
        \centering
        \begin{tikzpicture}[
            >=latex,
            scale=0.8,
            roundnode/.style={
                circle,
                draw=black!90,
                thick,
                minimum size=2mm,
                inner sep=0pt
            },
            fsnode/.style={
                circle,
                draw=black!80,
                fill=gray!60,
                thick,
                minimum size=2mm,
                inner sep=0pt
            }
        ]

            \node[fsnode]    (a)  at (0,0) {};
            \node[roundnode] (b)  at (3,0) {};
            \node[roundnode] (c)  at (0,-3) {};
            \node[roundnode] (d)  at (3,-3) {};

            \node[roundnode] (a1) at (0.75,-0.75) {};
            \node[fsnode]    (b1) at (2.25,-0.75) {};
            \node[fsnode]    (c1) at (0.75,-2.25) {};
            \node[roundnode] (d1) at (2.25,-2.25) {};

            \node[left=1mm]  at (a) {$a$};
            \node[right=1mm] at (b) {$b$};
            \node[left=1mm]  at (c) {$c$};
            \node[right=1mm] at (d) {$d$};

            \node[above=0.7mm] at (a1) {$a'$};
            \node[above=1pt]   at (b1) {$b'$};
            \node[below=1pt]   at (c1) {$c'$};
            \node[below=1pt]   at (d1) {$d'$};

            \draw[line width=1pt, gray]
                (a)--(b)--(d)--(c)--(a)--(a1)--(c1)--(c);

            \draw[line width=1pt, gray]
                (a1)--(b1)--(d1)--(c1);

            \draw[line width=1pt, gray] (b)--(d1);
            \draw[line width=1pt, gray] (d)--(b1);

        \end{tikzpicture}

        \caption{A feedback vertex set $\{a,b',c'\}$}
        \label{fig:G-fvs}
    \end{subfigure}
    \hfill
    \begin{subfigure}[t]{0.46\textwidth}
        \centering
        \begin{tikzpicture}[
            >=latex,
            scale=0.8,
            roundnode/.style={
                circle,
                draw=black!90,
                thick,
                minimum size=2mm,
                inner sep=0pt
            },
            posedge/.style={
                blue,
                line width=1pt
            },
            negedge/.style={
                red,
                densely dotted,
                line width=1pt
            }
        ]

            \node[roundnode] (a)  at (0,0) {};
            \node[roundnode] (b)  at (3,0) {};
            \node[roundnode] (c)  at (0,-3) {};
            \node[roundnode] (d)  at (3,-3) {};

            \node[roundnode] (a1) at (0.75,-0.75) {};
            \node[roundnode] (b1) at (2.25,-0.75) {};
            \node[roundnode] (c1) at (0.75,-2.25) {};
            \node[roundnode] (d1) at (2.25,-2.25) {};

            \node[left=1mm]  at (a) {$a$};
            \node[right=1mm] at (b) {$b$};
            \node[left=1mm]  at (c) {$c$};
            \node[right=1mm] at (d) {$d$};

            \node[above=0.7mm] at (a1) {$a'$};
            \node[above=1pt]   at (b1) {$b'$};
            \node[below=1pt]   at (c1) {$c'$};
            \node[below=1pt]   at (d1) {$d'$};

            \draw[negedge] (a)--(b);
            \draw[negedge] (a1)--(c1);

            \draw[posedge] (b)--(d)--(c)--(a);
            \draw[posedge] (a)--(a1);
            \draw[posedge] (c1)--(c);
            \draw[posedge] (a1)--(b1)--(d1)--(c1);
            \draw[posedge] (b)--(d1);
            \draw[posedge] (d)--(b1);

        \end{tikzpicture}
        \caption{$|\Sigma|=F_{\max}(H)$.}
        \label{fig:G-neg}
    \end{subfigure}

    \caption{The Wagner graph $H$ with $F_{\max}(H)=2$ and $\operatorname{fvs}(H)=3$.}
    \label{fig:twisted cube}
\end{figure}

However, when restricted to planar graphs, we discover a connection between these two parameters and prove the following main result. 

\begin{theorem}\label{thm:fvs}
For any planar graph $G$, $\mathrm{fvs}(G)\le F_{\max}(G)$.
\end{theorem}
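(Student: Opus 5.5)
The plan is to prove a lower bound on $F_{\max}(G)$ through a packing argument, and a matching upper bound on $\mathrm{fvs}(G)$ through an ear decomposition.

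\textbf{Reductions.} Both parameters are additive over connected components. $F_{\max}$ is additive over blocks, since signatures on different blocks can be switched independently. $\mathrm{fvs}$ is subadditive over blocks, since a union of feedback vertex sets of the blocks is a feedback vertex set of $G$. Bridges contribute to neither parameter. It therefore suffices to treat a $2$-connected plane graph $G$, in which every face is bounded by a cycle.

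\textbf{Lower bound on $F_{\max}$ by weak duality.} Fix a signature $\Sigma$ and a family $\mathcal{C}$ of cycles of $G$ with the following two properties: each cycle in $\mathcal{C}$ is negative, and every edge lies in at most two members of $\mathcal{C}$. Any edge set whose deletion balances $(G,\Sigma)$ must meet every negative cycle. Hence
$$F(G,\Sigma)\ge \left\lceil |\mathcal{C}|/2\right\rceil .$$
In planar-dual language, $F(G,\Sigma)$ is the minimum $T$-join in $G^*$, where $T$ is the set of negative faces, and cycles of $G$ are $T$-cuts; so this is the easy half of Seymour-type $T$-join/$T$-cut duality. The natural choice is to let $\mathcal{C}$ be a collection of facial cycles, each used once, and to choose $\Sigma$ so that a prescribed set of faces is negative. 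Any set of faces of even cardinality can be realized as the negative faces, by taking a $T$-join in the dual. For example, $K_4$ with all four faces negative gives $F_{\max}(K_4)\ge 2=\mathrm{fvs}(K_4)$.

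\textbf{Ear induction for $\mathrm{fvs}$.} Take an ear decomposition $C=P_0,P_1,\dots,P_k$ of $G$. I would carry out an induction, over the ears, of a statement of the form: there is a feedback vertex set $S$ and a set $\mathcal{F}$ of faces, with $|\mathcal{F}|$ even or one less than even, such that $|S|\le \lfloor |\mathcal{F}|/2\rfloor$, and such that $\mathcal{F}$ can be used as the family $\mathcal{C}$ above. Adding an ear $P$ splits one face into two. All new cycles pass through $P$, so deleting one endpoint of $P$ (or one internal vertex) restores acyclicity, and $\mathrm{fvs}$ grows by at most $1$. The bookkeeping has to ensure that $\mathrm{fvs}$ increases only after every two increases of the number of faces that we are allowed to make negative. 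Where a single ear does not create a new cycle needing a new deleted vertex (for instance, when it attaches to an already deleted vertex), no charge is needed. The ears must be ordered so that each charged step is paired with an uncharged or doubly-rewarding step. Equivalently, one wants to choose $S$ greedily so that each newly deleted vertex is witnessed by two new negative faces, or by one new negative face plus parity slack.

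\textbf{Main obstacle.} The hard step is this accounting. Using all faces gives only $F_{\max}\ge \lceil f/2\rceil$ with $f=e-v+2$, and $\mathrm{fvs}\le f/2$ is not automatic. So the choice of which faces to make negative, and which vertex to delete at each ear, must be coordinated. A single deleted vertex of high degree may kill many faces at once, and that slack must be spent to cover ears that add isolated short cycles. I would first try to process ears so that each ear is attached with at least one endpoint outside the current $S$. A new vertex would then be forced only when the ear closes a cycle through two non-deleted vertices, and I would show that such forced steps can be paired with distinct faces of the final embedding.
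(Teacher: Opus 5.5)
\textbf{There is a genuine gap.} Your reductions and the packing lower bound are correct. However, the step that carries the whole theorem, the accounting in the ear induction, is left open, and the invariant you propose for it is false.

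If $\mathcal{F}$ is a set of faces of the final embedding, then $|S|\le\lfloor|\mathcal{F}|/2\rfloor$ forces $\mathrm{fvs}(G)\le\lfloor f/2\rfloor$. (The number of negative faces is always even, so using all faces gives $\lfloor f/2\rfloor$, not $\lceil f/2\rceil$.) This bound already fails for $2$-connected planar graphs. Take three disjoint copies of $K_4$ and join them in a ring by three edges (or use the paper's $G_n$ with $n\ge 3$). Then $v=12$, $e=21$ and $f=11$, so $\lfloor f/2\rfloor=5$. But $\mathrm{fvs}=6$, since each copy needs two deleted vertices and the copies are vertex-disjoint.

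The theorem survives because $F_{\max}\ge 6$. Making a perfect matching negative in each copy makes all four triangles of every copy negative, and these twelve triangles form a $2$-packing. Three of them, one outer triangle per copy, are not faces of the whole graph. So facial cycles cannot certify enough frustration, and no bookkeeping over ears of $G$ with face-witnesses can close the argument. General $2$-packings of negative cycles would suffice in principle; in the dual this is Seymour's $T$-cut packing theorem. But you give no way to build such a packing alongside $S$.

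\textbf{What is missing is an exact handle on $F_{\max}$.} The paper proves $F_{\max}(G)=\mu(G^*)$, the largest size of a minimum $T$-join in the dual over all even $T$. Frank's theorem $\varphi(G^*)=2\mu(G^*)-v(G^*)+1$ then gives $F_{\max}(G)=\bigl(f-1+\varphi(G^*)\bigr)/2$, where $\varphi(G^*)$ is the minimum number of even ears in an ear decomposition of $G^*$. The surplus $\varphi(G^*)/2$ is exactly what face packings miss.

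The ear decomposition is then taken in the \emph{dual}, not in $G$. In $G$, adding a dual ear of length $p_i$ corresponds to splitting a vertex into two and joining the two halves by $p_i$ parallel edges. Delete one of these edges for each even ear. What remains has a vertex ordering in which every vertex has an odd number of edges to earlier vertices. This yields a spanning tree together with a partition of the remaining edges into incident pairs. Choosing one endpoint of each deleted edge and one common endpoint of each pair gives a feedback vertex set of size at most $\bigl(e-v+1+\varphi(G^*)\bigr)/2=\mu(G^*)$.
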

This bound is tight. We present an infinite family of $2$-edge-connected signed planar graphs $G_n$ that satisfies that $\mathrm{fvs}(G_n)=F_{\max}(G)$. Each $G_n$ is formed by connecting the components $F_1, F_2, \ldots, F_n$ by adding positive edges $x_iy_{i-1}$ with indices taken modulo $n$. Here $F_i$ is a $2$-subdivision of $K_4$ where the two subdivided edges are adjacent, as depicted in~\Cref{fig:k4sub}.

\begin{figure}[htbp]
  \centering
  \begin{tikzpicture}[
    >=latex,
    scale=0.8,
    roundnode/.style={
      circle,
      draw=black!90,
      thick,
      minimum size=2mm,
      inner sep=0pt
    },
    fsnode/.style={
      circle,
      draw=black!90,
      fill=gray!70,
      thick,
      minimum size=2mm,
      inner sep=0pt
    }
  ]
    \node[roundnode] (c) at (0,1.8) {};
    \node[roundnode] (d) at (-1.5,-1.2) {};
    \node[fsnode]    (a) at (1.5,-1.2) {};
    \node[roundnode] (b) at (0,-0.25) {};
    \node[fsnode]    (y) at (-0.75,0.3) {};
    \node[roundnode] (x) at (0.75,0.3) {};

    \draw[fill=white,line width=0.2pt] (0,1.8) node[above=1mm] {$c_i$};
    \draw[fill=white,line width=0.2pt] (-1.5,-1.2) node[below=1mm] {$d_i$};
    \draw[fill=white,line width=0.2pt] (1.5,-1.2) node[below=1mm] {$a_i$};
    \draw[fill=white,line width=0.2pt] (0,-0.25) node[below=1mm] {$b_i$};
    \draw[fill=white,line width=0.2pt] (-0.75,0.3) node[left=1mm][above=1mm] {$y_i$};
    \draw[fill=white,line width=0.2pt] (0.75,0.3) node[right=1mm][above=1mm] {$x_i$};

    \draw[line width=1pt, blue] (d)--(y)--(c)--(x)--(a);
    \draw[line width=1pt, blue] (d)--(b)--(a);
    \draw[line width=1pt, red, densely dotted] (c)--(b);
    \draw[line width=1pt, red, densely dotted] (d)--(a);
    \draw[line width=1pt, blue] (y)--(-2.2,0.3);
    \draw[line width=1pt, blue] (x)--(2.2,0.3);
  \end{tikzpicture}
  \caption{$G_n$ with a minimum feedback vertex set $\{a_1,y_1,\cdots,a_n,y_n\}$, and a maximum signature $\{a_1d_1,b_1c_1,\cdots ,a_nd_n,b_nc_n\}$.}
  \label{fig:k4sub}
\end{figure} 

For the maximum frustration index of a planar graph, the following result can be directly obtained from the dual setting using $T$-joins. Together with \Cref{thm:fvs}, it implies \Cref{thm:main}.

\begin{theorem}\label{prop:fmax}
Let $G$ be a simple planar graph of girth at least $g$ and with $b$ bridges. Then
$$
F_{\max}(G) \leq \lfloor \frac{e(G)-b}{g}\rfloor.
$$
\end{theorem}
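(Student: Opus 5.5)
We pass to the planar dual and use $T$-joins. First reduce to the case that $G$ is connected: $F_{\max}$ is additive over components, and so is $e(G)-b$. We may also delete all bridges, since a bridge lies on no cycle and can always be made positive by switching. Hence $F_{\max}(G)=F_{\max}(G')$, where $G'$ is $G$ minus its bridges, and $e(G')=e(G)-b$. Then $G'$ is bridgeless, its blocks are $2$-edge-connected, and it is enough to show $F_{\max}(G)\le e(G)/g$ for a bridgeless plane graph $G$ of girth at least $g$; the floor follows because $F_{\max}$ is an integer.

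Next, fix a plane embedding of $G$ and let $G^*$ be its dual. Since $G$ is bridgeless, $G^*$ is loopless. A signature $\Sigma$ is determined up to switching by the set of faces whose boundary contains an odd number of negative edges. Call this set $T\subseteq V(G^*)$; it has even size. An edge set $\Sigma'$ is switching equivalent to $\Sigma$ exactly when the dual edges $(\Sigma')^*$ form a $T$-join in $G^*$. This holds because cycle spaces of planar graphs are generated by facial cycles, so two signatures with the same face parities differ by a cut. Therefore
$$F(G,\Sigma)=\tau_T(G^*),$$
the minimum size of a $T$-join in $G^*$. So $F_{\max}(G)=\max_{T}\tau_T(G^*)$, taken over even-sized $T$, and we must show $\tau_T(G^*)\le e(G)/g$ for every such $T$.

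The key step uses the Edmonds–Johnson / Seymour-type min–max: $\tau_T(H)$ equals the maximum number of edge-disjoint $T$-cuts when $H$ is bipartite, and in general $2\tau_T(H)\le$ a maximum fractional packing of $T$-cuts. The dual statement is cleaner to use directly. A minimum $T$-join $J$ has the property that every cycle $C$ of $G^*$ satisfies $|J\cap C|\le |C|/2$; otherwise we could replace $J$ by $J\triangle C$. I would instead argue by averaging. Consider the cuts of $G^*$ dual to cycles of $G$: every minimal cut (bond) of $G^*$ is the dual of a cycle of $G$ and thus has size at least $g$. If we take an optimal fractional packing of $T$-cuts $\{y_D\}$, with $\sum_{D\ni e} y_D\le 1$ for each edge $e$, then $\tau_T = \sum y_D$ by Edmonds–Johnson's LP duality, since the $T$-join polyhedron is integral. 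Each $T$-cut $D$ contains a bond that is itself a $T$-cut? Not necessarily, but each $T$-cut is a disjoint union of bonds, so $|D|\ge g$. Summing over edges gives
$$g\sum_D y_D\le \sum_D y_D|D| = \sum_e\sum_{D\ni e}y_D\le e(G^*)=e(G),$$
and hence $\tau_T(G^*)\le e(G)/g$.

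The main obstacle is justifying the LP duality $\tau_T=\max\{\sum y_D\}$ over fractional $T$-cut packings. This follows from Edmonds–Johnson: the dominant of the $T$-join polytope is $\{x\ge 0: x(D)\ge 1 \text{ for all } T\text{-cuts } D\}$, and we apply LP duality with the all-ones objective. One must also check that each nonempty $T$-cut has at least $g$ edges, which needs $G$ bridgeless so that duals of cycles are exactly the bonds. Equivalently, one could avoid polyhedral theory by using the fractional bound $\mathbf{1}/g$: the vector $x\equiv 1/g$ satisfies $x(D)\ge 1$ for every $T$-cut $D$, so it lies in the $T$-join dominant, and integrality gives a $T$-join of size at most $e(G)/g$. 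I would present it this way.
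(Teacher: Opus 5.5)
Your proposal is correct and is essentially the paper's proof: you delete bridges, identify $F(G,\Sigma)$ with $\tau_{G^*}(T)$ via the odd-face set $T$, and feed the point $x\equiv 1/g$ into the Edmonds--Johnson description, using that every nonempty cut of $G^*$ is a disjoint union of bonds dual to cycles of $G$ and so has at least $g$ edges. There are two harmless slips: the aside ``$2\tau_T(H)\le$ a maximum fractional packing'' is wrong as stated (the fractional $T$-cut packing number equals $\tau_T$; the factor $2$ belongs to Seymour's packings that use each edge at most twice), and since deleting bridges can disconnect the graph, you should apply additivity over components once more before dualizing---neither affects the argument you actually present.
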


We observe that if a planar graph $G$ with girth $g$ has an even number of faces, all of length $g$, then its dual admits a perfect matching $M$ whose corresponding dual edges in $G$ form exactly a maximum signature of $G$. Consequently, such a graph $G$ satisfies $F_{\max}(G)=\frac{f(G)}{2}=\frac{e(G)}{g}$. 


\medskip
We now introduce some terminologies at the end of this section. For any two edge subsets $F_1, F_2\subseteq E(G)$, we use $F_1\Delta F_2$ to denote the symmetric difference of $F_1$ and $F_2$, that is, $F_1\Delta F_2:=(F_1\cup F_2)\setminus (F_1\cap F_2)$. For $X\subseteq V(G)$, let $\delta_G(X)$ denote the set of edges with exactly one end in $X$, where $\delta_G(X)$ is called an \emph{edge cut} of $G$. A \emph{bridge} is an edge cut with exactly one edge. We call an edge subset $F\subseteq E(G)$ \emph{even-degree} if the subgraph induced by $F$ is an even-degree subgraph.

Throughout this paper, we assume that $G$ is a simple connected plane graph with a fixed planar embedding. The \emph{dual graph} of a plane graph $G$, denoted $G^*$, is a plane graph with faces of $G$ as its vertex set, and the edges of $G^*$ correspond to the edges of $G$ as follows: for each edge $e$ of $G$, there is an edge $e^*$ of $G^*$ joining the vertices corresponding to the two faces incident with $e$; if $e$ is incident with only one face (that is, if e is a bridge), then $e^*$ is a loop. So we can see that $v(G^*)=f(G)$ (where $f(G)$ denotes the number of faces of $G$), and $e(G^*)=e(G)$. The dual $(G^*,\Sigma^*)$ of a signed plane graph $(G,\Sigma)$ is defined as follows: the underlying graph is $G^*$, and $e^*$ belongs to $\Sigma^*$ when $e\in \Sigma$.

We have the following observation about $\mathrm{fvs}(G)$ and $F_{\max}(G)$.
\begin{observation}\label{obs:bridge}
For any bridge $e$ of $G$, $\mathrm{fvs}(G)=\mathrm{fvs}(G-e)$, and $F_{\max}(G)=F_{\max}(G-e)$.
\end{observation}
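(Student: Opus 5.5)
For the feedback vertex set part, I would use the fact that a bridge $e=uv$ of $G$ lies on no cycle. Hence a set $S\subseteq V(G)$ meets every cycle of $G$ if and only if it meets every cycle of $G-e$, because the two graphs have exactly the same cycles. It follows that $G-S$ is a forest exactly when $(G-e)-S$ is a forest: deleting $e$ removes no cycle, and adding $e$ back joins two vertices that lie in different components of $(G-e)-S$ (or $e$ is simply removed with $S$), so no new cycle appears. Therefore the feedback vertex sets of $G$ and $G-e$ coincide, and $\mathrm{fvs}(G)=\mathrm{fvs}(G-e)$.

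For the frustration index part, I would use the cycle characterization. Two signatures are switching equivalent if and only if every cycle has the same sign under both (Zaslavsky), and $F(G,\Sigma)$ is the minimum number of edges whose deletion leaves a balanced graph. Since $e$ lies on no cycle, the sign of $e$ never affects balance, and we may switch at the cut $\{e\}=\delta_G(X)$, where $X$ is the vertex set of one side of the bridge. This switch changes only the sign of $e$, so we may assume $e\notin\Sigma$.

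Next I would show $F(G,\Sigma)=F(G-e,\Sigma\setminus\{e\})$ for every signature $\Sigma$ on $G$.
\begin{itemize}
\item If $D\subseteq E(G)$ is a deletion set making $(G,\Sigma)$ balanced, then $D\setminus\{e\}$ makes $(G-e,\Sigma\setminus\{e\})$ balanced, since the cycles of $(G-e)-(D\setminus\{e\})$ are cycles of $G-D$ with the same signs.
\item Conversely, a deletion set for $G-e$ also works for $G$, because adding back the bridge $e$ creates no cycle.
\end{itemize}
So the minimum deletion sizes agree.

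Finally, every signature on $G-e$ extends to a signature on $G$ by making $e$ positive. The map $\Sigma\mapsto\Sigma\setminus\{e\}$ therefore reaches all signatures on $G-e$ while preserving the frustration index. Taking maxima on both sides gives $F_{\max}(G)=F_{\max}(G-e)$.

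I do not expect any step to be a real obstacle. The only point needing care is the claim that switching at a bridge, or deleting it, leaves the equivalence class otherwise unchanged, and this is immediate from the cut description of switching.
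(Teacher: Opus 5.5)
Your proof is correct. The paper states this observation without proof, and your reasoning is exactly the intended justification: a bridge lies on no cycle, so $G$ and $G-e$ have the same cycles, hence the same feedback vertex sets and the same balancing deletion sets up to $e$. The initial switching at $\{e\}$ is harmless but redundant, since your two bullet points already give $F(G,\Sigma)=F(G-e,\Sigma\setminus\{e\})$ whether or not $e\in\Sigma$.
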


Hence, for bounding these two parameters, we may restrict our attention to $2$-edge-connected graphs. In \Cref{sec:eq-fvs}, we provide some relations between $F_{\max}(G)$ and $T$-joins, and then prove~\Cref{prop:fmax}; and in \Cref{sec:fvs}, we prove~\Cref{thm:fvs}.

\section[Tjoins]{The frustration index and the $T$-joins}\label{sec:eq-fvs}

Let $G$ be a connected plane graph and let $G^*$ be its dual graph. For any edge subset $F\subseteq E(G)$, let $F^*:=\{e^*: e\in F\}\subseteq E(G^*)$. Note that $|F^*|=|F|$. For any vertex subset $X\subseteq V(G)$, since $(\delta_G(X))^*=(\delta_G(v_1))^*\Delta\cdots\Delta (\delta_G(v_k))^*$ for all $v_i\in X$ and each of $(\delta_G(v_k))^*$ induces a cycle in $G^*$, we have that $(\delta_G(X))^*$ is an even-degree edge set. 

For $F\subseteq E(G)$, let $O_{G}(F)=\{v\in V(G): d_F(v)\text{ is odd}\}$. Given a vertex subset $T\subseteq V(G)$ with $|T|$ even, an edge subset $F$ is called a \emph{$T$-join} if $O_{G}(F)=T$. For any connected graph $G$ and any even vertex subset $T\subseteq V(G)$, a $T$-join exists, and thus let $\tau_G(T)$ denote the minimum size of a $T$-join in $G$, that is, $$\tau_G(T):=\min\{\abs{F}: F\subseteq E(G),\ O_{G}(F)=T\}.$$

\begin{observation}\label{obs:dual-signatures}
Let $G$ be a connected plane graph with a fixed embedding and let $G^*$ denote its dual graph. Two edge subsets $F_1$ and $F_2$ of $E(G)$ are switching-equivalent signatures on $G$ if and only if $O_{G^*}(F_1^*)=O_{G^*}(F_2^*)$.
\end{observation}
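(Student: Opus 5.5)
We need to prove Observation 2: $F_1, F_2$ are switching equivalent iff $O_{G^*}(F_1^*)=O_{G^*}(F_2^*)$.

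The plan is to reduce the statement to a single fact about plane duality: an edge set $X\subseteq E(G)$ is an edge cut of $G$ if and only if $X^*$ is an even-degree edge set of $G^*$. Granting this, both directions are immediate. By definition, $F_1$ and $F_2$ are switching equivalent iff $F_1\Delta F_2=X$ for some edge cut $X=\delta_G(U)$. Parity of degrees is additive under symmetric difference, that is, $O_{G^*}(A\Delta B)=O_{G^*}(A)\Delta O_{G^*}(B)$. Also $(F_1\Delta F_2)^*=F_1^*\Delta F_2^*$, since $e\mapsto e^*$ is a bijection. Hence
\[
O_{G^*}(F_1^*)=O_{G^*}(F_2^*) \iff O_{G^*}\bigl((F_1\Delta F_2)^*\bigr)=\emptyset \iff (F_1\Delta F_2)^* \text{ is even-degree}.
\]
So the observation is equivalent to: $(F_1\Delta F_2)^*$ is even-degree iff $F_1\Delta F_2$ is an edge cut. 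One technical point about loops: a loop contributes $2$ to the degree, so bridges, whose duals are loops, never affect $O_{G^*}$. This is consistent, because a bridge $e$ is itself a cut, so $\{e\}$ is a cut whose dual is even-degree.

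The forward direction of the key fact is already noted in the paper. We have $(\delta_G(U))^*=\Delta_{v\in U}(\delta_G(v))^*$, and each $(\delta_G(v))^*$ is the boundary walk of the face of $G^*$ corresponding to $v$. Every face boundary is even-degree, since each vertex on a closed walk is entered and left the same number of times, with loops counting $2$. Symmetric differences of even-degree sets are even-degree, so $(\delta_G(U))^*$ is even-degree.

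For the converse, suppose $C=X^*$ is even-degree in $G^*$. I would show that $X$ is a cut by a Jordan-curve style two-colouring of $V(G)$. Since $C$ is even-degree, it decomposes into edge-disjoint cycles of the plane graph $G^*$. Each such cycle $C_i$ is a simple closed curve, and by the Jordan curve theorem it separates the plane into an inside and an outside. Let $U_i$ be the set of vertices of $G$ lying inside $C_i$. An edge $e$ of $G$ crosses the curve $C_i$ exactly when $e^*\in C_i$, and otherwise does not meet it. Hence $\delta_G(U_i)=\{e : e^*\in C_i\}$. Taking symmetric differences, $X=\Delta_i \delta_G(U_i)=\delta_G(\Delta_i U_i)$ is an edge cut.

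An alternative that avoids topology is linear algebra over $\mathrm{GF}(2)$. The even-degree sets of $G^*$ form its cycle space, of dimension $e-f+1$ when $G^*$ is connected. The duals of cuts form a subspace of the cycle space, of dimension $v-1$ because the cut space of the connected graph $G$ has dimension $v-1$. Euler's formula $v-e+f=2$ gives $v-1=e-f+1$, so the two spaces coincide. I would use this dimension count as the main argument, since it is cleanest; $G^*$ is connected because it is the dual of a plane graph.

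The main obstacle is the converse direction, namely that every even-degree set of $G^*$ is dual to a cut. The dimension argument handles it, but it relies on two things: the standard dimensions of the cut and cycle spaces, and injectivity of $*$, which holds since $e\mapsto e^*$ is a bijection.
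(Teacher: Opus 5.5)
Your proposal is correct and follows essentially the same route as the paper. Both reduce the statement to the fact that $F_1\Delta F_2$ is an edge cut of $G$ if and only if $(F_1\Delta F_2)^*$ is even-degree in $G^*$, prove the forward direction by writing $(\delta_G(U))^*$ as a symmetric difference of the duals of vertex stars, and get the converse from planar duality. The only difference is that the paper just cites ``planar duality'' for the converse, whereas you justify it explicitly, by the $\mathrm{GF}(2)$ dimension count with Euler's formula or by the Jordan-curve two-colouring.
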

\begin{proof}
By definition, $F_1$ and $F_2$ are switching-equivalent if and only if $F_1\Delta F_2=\delta_G(X)$ for some vertex subset $X$. Moreover, note that for any $F_1,F_2\subseteq E(G)$, $F_1\Delta F_2$ is an even-degree edge set if and only if $O_{G}(F_1)=O_{G}(F_2)$.

Assume that $F_1$ and $F_2$ are switching-equivalent. By the planarity, $F_1^*\Delta F_2^*=(F_1\Delta F_2)^*=(\delta_G(X))^*$ for some vertex subset $X$. Hence, $F_1^*\Delta F_2^*$ is an even-degree edge set, and thus $O_{G^*}(F_1^*)=O_{G^*}(F_2^*)$. 

Conversely, assume that $O_{G^*}(F_1^*)=O_{G^*}(F_2^*)$. Then $(F_1\Delta F_2)^*=F_1^*\Delta F_2^*$ is an even-degree edge set in $G^*$. By the planar duality, $F_1\Delta F_2$ is an edge cut in $G$, and thus $F_1$ and $F_2$ are switching-equivalent.
\end{proof}

The following result is folklore and can also be found in~\cite{KI1978}. For completeness, we include a proof.

\begin{lemma}[Katai and Iwai~\cite{KI1978}]\label{lem:tjoin-fmax}
For any connected plane graph $G$ and its dual graph $G^*$, $$F_{\max}(G)=\max\limits_T\{\tau_{G^*}(T): T\subseteq V(G^*), \abs{T}\text{~even} \}.$$
\end{lemma}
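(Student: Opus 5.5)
The plan is to use \Cref{obs:dual-signatures} to show that $F(G,\Sigma)=\tau_{G^*}(O_{G^*}(\Sigma^*))$ for each signature $\Sigma$. The two quantities then range over the same values, and taking maxima gives the lemma.

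Fix a signature $\Sigma$ on $G$ and set $T:=O_{G^*}(\Sigma^*)$. Since every graph has an even number of odd-degree vertices, $|T|$ is even. By definition, $F(G,\Sigma)$ is the minimum of $|\Sigma'|$ over signatures $\Sigma'$ switching-equivalent to $\Sigma$. By \Cref{obs:dual-signatures}, these $\Sigma'$ are exactly the edge sets $\Sigma'\subseteq E(G)$ with $O_{G^*}(\Sigma'^*)=T$. The map $F\mapsto F^*$ is a bijection between subsets of $E(G)$ and subsets of $E(G^*)$, and it preserves cardinality. Hence the family of such $\Sigma'^*$ is precisely the family of $T$-joins of $G^*$. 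Minimizing cardinality on both sides gives $F(G,\Sigma)=\tau_{G^*}(T)$. Taking the maximum over all $\Sigma$ yields
$$F_{\max}(G)\le \max_T\{\tau_{G^*}(T): T\subseteq V(G^*),\ |T|\text{ even}\}.$$

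For the reverse inequality, fix an even set $T\subseteq V(G^*)$. The dual $G^*$ of a connected plane graph is connected, so a $T$-join $J$ exists. Let $\Sigma$ be the edge set of $G$ with $\Sigma^*=J$. Then $O_{G^*}(\Sigma^*)=T$, and the previous paragraph gives $F(G,\Sigma)=\tau_{G^*}(T)$. Therefore $\tau_{G^*}(T)\le F_{\max}(G)$, and taking the maximum over $T$ finishes the proof.

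I expect no real obstacle, since the substance is already in \Cref{obs:dual-signatures}. The only points to check are the following.
\begin{itemize}
\item Loops in $G^*$ coming from bridges of $G$ contribute $2$ to a degree, so they never affect parity. The observation is stated for all edge subsets, so it applies as is.
\item The connectivity of $G^*$ is needed for existence of a $T$-join, and it holds for duals of plane graphs.
\end{itemize}
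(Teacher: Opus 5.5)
Your proposal is correct and follows the paper's proof essentially step for step. Both use \Cref{obs:dual-signatures} to identify the switching class of $\Sigma$ with the set of $O_{G^*}(\Sigma^*)$-joins in $G^*$, which gives $F(G,\Sigma)=\tau_{G^*}(O_{G^*}(\Sigma^*))$, and both realize each even $T$ by dualizing a $T$-join back to a signature on $G$. Your side remarks on loops, on the parity of $|T|$, and on the connectivity of $G^*$ (needed for a $T$-join to exist) are correct details that the paper leaves implicit.
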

\begin{proof}
Let $G$ be a connected plane graph and let $G^*$ be its dual graph. For a signature $\Sigma$ on $G$ and a switching-equivalent signature $\Sigma'$ of $\Sigma$, by~\Cref{obs:dual-signatures}, $O_{G^*}(\Sigma^*)=O_{G^*}((\Sigma')^*)$ and thus let $T_\Sigma:=O_{G^*}(\Sigma^*)=O_{G^*}((\Sigma')^*)$. Moreover, for each edge set $F\subseteq E(G^*)$ such that $O_{G^*}(F)=T_\Sigma$, $F^*$ is a signature on $G$ switching-equivalent to $\Sigma$.
So we have that
{\small 
$$
F(G,\Sigma)=\min\limits_{\Sigma'\subseteq E(G)}\{|\Sigma'|:\Sigma'\text{ switching equivalent to }\Sigma\}=\min\limits_{F\subseteq E(G^*)}\{|F|:O_{G^*}(F)=T_\Sigma\}=\tau_{G^*}(T_\Sigma).
$$} Conversely, for every even set $T\subseteq V(G^*)$, by \Cref{obs:dual-signatures}, any two $T$-joins $F_1$ and $F_2$ in $G^*$ imply two switching-equivalent signatures $F_1^*$ and $F_2^*$ on $G$. Let $J^*:=F^*_1$ be one $T$-join. Moreover, for any signature $\Sigma$ on $G$ that is switching-equivalent to $J^*$, $\Sigma^*$ is a $T$-join in $G^*$. Thus 
{\small $$
\tau_{G^*}(T)=\min\limits_{F\subseteq E(G^*)}\{|F|:O_{G^*}(F)=T\}=\min\limits_{\Sigma\subseteq E(G)}\{|\Sigma|:\Sigma\text{~switching-equivalent to $J^*$}\}=F(G,J^*).
$$
}
Therefore, $$F_{\max}(G)=\max\limits_\Sigma \{F(G, \Sigma):  \Sigma \text{~is a signature on~}G\}=\max\limits_T\{\tau_{G^*}(T): T\subseteq V(G^*), \abs{T}\text{~even} \}.$$ We complete the proof.
\end{proof}

For a connected graph $G$ and an even set $T\subseteq V(G)$, the Edmonds--Johnson theorem~\cite{EJ1973} gives the following polyhedral description of a minimum $T$-join. By applying this result through planar duality, we can directly obtain an upper bound on the maximum frustration index of a planar graph of large girth.

\begin{theorem}[Edmonds and Johnson~\cite{EJ1973}]\label{thm:Edmonds--Johnson}
For a connected graph $G$ and an even set $T\subseteq V(G)$,
\begin{equation}\label{eq:tjoin-polytope}
    \tau_G(T)
    =\min\left\{
        \sum_{e\in E(G)}x_e:
        \begin{array}{l}
        x_e\in \mathbb{R}^+,~~\forall e\in E(G);\\[1mm]
        \sum\limits_{e\in \delta_G(U)} x_e\geq 1
        \text{~for every $U\subseteq V(G)$ with~}\abs{U\cap T}\text{ odd.}
        \end{array}
    \right\}.
\end{equation}  
\end{theorem}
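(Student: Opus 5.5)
The easy inequality comes first. If $J$ is a $T$-join and $U\subseteq V(G)$ has $\abs{U\cap T}$ odd, then
$$\abs{J\cap\delta_G(U)}\equiv\sum_{v\in U}d_J(v)\equiv\abs{U\cap T}\pmod 2,$$
so $J$ meets every such cut. The characteristic vector of a minimum $T$-join is therefore feasible, and the right-hand side of~\eqref{eq:tjoin-polytope} is at most $\tau_G(T)$. The substance is the reverse inequality. By LP duality it is equivalent to a fractional packing statement: there exist weights $y_U\ge 0$ on the $T$-odd sets $U$ such that every edge lies in cuts $\delta_G(U)$ of total weight at most $1$, and $\sum_U y_U=\tau_G(T)$.

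I would build this packing from Edmonds' perfect matching theorem, applied to the metric closure of $T$.
\begin{enumerate}[label=(\roman*)]
\item Let $d$ be the shortest-path metric of $G$, each edge having length $1$. Give the complete graph $K_T$ the costs $c(st)=d(s,t)$. Then $\tau_G(T)$ equals the minimum cost of a perfect matching of $K_T$. One direction joins matched pairs by shortest paths and takes the symmetric difference of these paths. The other decomposes a minimum $T$-join into $\abs{T}/2$ edge-disjoint paths pairing up $T$.
\item By Edmonds' perfect matching polytope theorem and LP duality, this minimum equals $\max\sum_S y_S$. Here $S$ ranges over odd subsets of $T$, and the constraints are $\sum_{S:\,st\in\delta(S)}y_S\le c(st)$ for all $s,t\in T$. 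Since $c$ is a metric, I would use the standard uncrossing/primal--dual argument, as in the Goemans--Williamson moat picture, to take $y\ge 0$ and supported on a laminar family $\mathcal L$.
\item I would realize each $S\in\mathcal L$ as a moat in $G$. Grow regions around the sets of $\mathcal L$ continuously in the metric space of $G$ (edges are unit segments), processing the laminar family from the inside out. The set $S$ contributes an annulus of width $y_S$ enclosing exactly the terminals of $S$. For each radius in that annulus, the vertices strictly inside it form a set $U$ with $U\cap T=S$, which is odd. Integrating over the width gives cut weights on the sets $\delta_G(U)$ totalling $y_S$.
\item The total packed weight is then $\sum_S y_S=\tau_G(T)$. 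The load on each edge is at most the length of that edge covered by moats, which is at most $1$ provided the moats are pairwise disjoint.
\end{enumerate}

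The main obstacle is proving the moats are disjoint. This should follow from the dual constraints $\sum_{S \text{ separating } s,t}y_S\le d(s,t)$, via a growth-until-collision argument. Grow all active moats at unit rate. When two moats touch, the dual constraint shows that merging keeps feasibility, and by laminarity the merged moat corresponds to the parent set in $\mathcal L$.

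A second subtlety is that radii fall in the middle of edges. The sets $U$ only change at vertices, so the integrals are really finite sums with rational weights. If the moat argument becomes too delicate, my fallback is a direct proof. I would show every vertex of the polyhedron in~\eqref{eq:tjoin-polytope} is integral by taking a tight laminar family of $T$-cuts (uncrossing) and inducting on $\abs{V(G)}$ by contracting a tight nontrivial cut $\delta_G(U)$ to a single vertex on each side.
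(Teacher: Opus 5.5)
The paper does not prove this statement. It is quoted from Edmonds and Johnson and used as a black box in the proof of \Cref{prop:fmax}, so your proposal has to stand on its own.

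Your easy direction is correct, and steps (i), (iii), (iv) can be made rigorous. The moat disjointness you flag as the main obstacle is in fact immediate, and needs no growth-until-collision process (which would not describe an arbitrary optimal laminar dual anyway). Put $\rho_S(s)=\sum_{S'\in\mathcal L,\ s\in S'\subseteq S}y_{S'}$ and $g_S(p)=\min_{s\in S}\bigl(d(p,s)-\rho_S(s)\bigr)$, and let the moat of $S$ be $\{p:-y_S<g_S(p)<0\}$.
\begin{itemize}
\item If $S\subsetneq S'$, then $g_{S'}\le g_S-y_{S'}$, so the two moats are disjoint.
\item If $S\cap S'=\emptyset$, a common point would give $s\in S$, $s'\in S'$ with $d(s,s')<\rho_S(s)+\rho_{S'}(s')\le d(s,s')$. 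The last inequality is the dual constraint for the pair $s,s'$, since every set in the two chains separates them.
\item The same constraint gives $g_S(t)\ge 0$ for $t\in T\setminus S$, so the level sets satisfy $U\cap T=S$.
\end{itemize}

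The genuine gap is the phrase ``take $y\ge 0$'' in (ii), and all of the above depends on it. A negative $y_{\{v\}}$ produces negative cut weights, and it breaks $\rho_S(s)\ge y_S$, which is what puts $S$ inside every level set.
\begin{itemize}
\item In the dual of Edmonds' perfect matching LP, the singleton variables belong to the equalities $x(\delta(v))=1$ and are free in sign.
\item Uncrossing only makes the support laminar.
\item The Goemans--Williamson moat process yields nonnegative feasible duals, but certifies only within a factor $2$ of the optimum.
\end{itemize}
Worse, by LP duality the existence of an optimal $y\ge 0$ is equivalent to
\[
\min\{c^{\top}x:\ x\ge 0,\ x(\delta(S))\ge 1\ \text{for every odd } S\subseteq T\}=\min\{c(M): M \text{ a perfect matching of } K_T\}.
\]
Under a metric every $V(K_T)$-join can be shortcut to a perfect matching of no larger cost. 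So this is exactly the Edmonds--Johnson theorem for the weighted instance $(K_T,c)$ with terminal set $V(K_T)$. Your steps (iii)--(iv) therefore reduce the statement to a weighted case of itself, and the real content is assumed.

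You need an independent argument at this point. Possible routes:
\begin{itemize}
\item Prove a fractional splitting-off lemma for that LP at any $v$ with $x(\delta(v))>1$. The odd-set requirement is symmetric and skew-supermodular, but the lemma must actually be proved.
\item Subdivide every edge of $G$ once and apply Seymour's theorem that in a bipartite graph the minimum $T$-join size equals the maximum number of disjoint $T$-cuts. This gives $2\tau_G(T)$ $T$-cuts of $G$ covering each edge at most twice, i.e.\ the required packing with weights $\tfrac12$.
\item Develop your fallback into a complete integrality proof.
\end{itemize}
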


Now we are ready to present the proof of~\Cref{prop:fmax}.

\begin{proof}[Proof of~\Cref*{prop:fmax}]
By \Cref{obs:bridge}, it suffices to prove that $F_{\max}(G)\leq \lfloor \frac{e(G)}{g}\rfloor$ for every $2$-edge-connected planar graph $G$ of girth at least $g$. Indeed, bridges can be deleted without affecting the frustration index, and hence we only need to consider the $2$-edge-connected components of it.

Let $G$ be a $2$-edge-connected plane graph and $G^*$ be its dual plane graph. Let $T$ be an arbitrary even vertex subset of $V(G^*)$. We assign $x_e:=\frac{1}{g}$ for every edge $e\in E(G^*)$. By the planar duality, every edge cut $\delta_{G^*}(U)$ corresponds to an even-degree subgraph (induced by $(\delta_{G^*}(U))^*$) in $G$, which must contain a cycle, say $C_U$. Since $G$ is of girth at least $g$, $|\delta_{G^*}(U)|=|(\delta_{G^*}(U))^*|\geq |C_U|\geq g$. This implies that $\sum\limits_{e\in \delta_{G^*}(U)} x_e\geq \frac{1}{g}|C_U|\ge 1$. So such an assignment $x_e$ is feasible for Conditions~(\ref{eq:tjoin-polytope}). Hence, applying~\Cref{thm:Edmonds--Johnson} to $G^*$, $$\tau_{G^*}(T)\le \sum_{e\in E(G^*)}x_e=\frac{e(G^*)}{g}=\frac{e(G)}{g}.$$ Since the above bound applies to any arbitrary even set $T$, \Cref{lem:tjoin-fmax} implies that $F_{\max}(G)=\max\{\tau_{G^*}(T): T\subseteq V(G^*), ~|T|~\text{even} \}\le \frac{e(G)}{g}$ and naturally $F_{\max}(G)\leq \lfloor \frac{e(G)}{g}\rfloor$. 
\end{proof}

\section{The frustration index and the feedback vertex set}\label{sec:fvs}

To establish a comparison between the maximum frustration index and the minimum size of a feedback vertex set, we need the following parameter. Let $$\mu(G):=\max\limits_{F}\{\abs{F}:F\subseteq E(G)\text{ and $\abs{F\cap E(C)}\le \frac{e(C)}{2}$ for every cycle $C$ of $G$}\}.$$
The following folklore fact relates $\mu(G)$ to the size of $T$-joins in $G$. This result can be found, for example, in~\cite{K1962}. For completeness, we provide a short proof.

\begin{lemma}[Kwan~\rm \cite{K1962}]\label{lem:join-fmax}
For any connected graph $H$, 
$$\mu(H)=\max\limits_T\{\tau_H(T): T\subseteq V(H), ~|T|~\text{even} \}.$$
\end{lemma}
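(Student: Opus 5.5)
We prove the two inequalities separately; both rest on the standard optimality criterion for $T$-joins: a $T$-join $J$ is minimum if and only if $\abs{J\cap E(C)}\le \frac{e(C)}{2}$ for every cycle $C$ of $H$. For completeness we recall its proof. Suppose some cycle $C$ has $\abs{J\cap E(C)}>\frac{e(C)}{2}$. Then $J\Delta E(C)$ is again a $T$-join, since $E(C)$ is even-degree, and it is strictly smaller. Conversely, let $J$ satisfy the cycle condition and let $J'$ be any $T$-join. Then $J\Delta J'$ is an even-degree edge set, so it decomposes into edge-disjoint cycles $C_1,\dots,C_k$. On each $C_i$, the edges of $C_i$ lying in $J\setminus J'$ number at most $\frac{e(C_i)}{2}$, hence at most the number lying in $J'\setminus J$. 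Summing over $i$ gives $\abs{J\setminus J'}\le\abs{J'\setminus J}$, and so $\abs{J}\le \abs{J'}$.

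\emph{Step 1: $\max_T \tau_H(T)\le \mu(H)$.} Fix an even set $T$ and let $J$ be a minimum $T$-join. By the criterion, $J$ meets every cycle $C$ in at most $\frac{e(C)}{2}$ edges. So $J$ is feasible in the definition of $\mu(H)$, and $\tau_H(T)=\abs{J}\le\mu(H)$.

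\emph{Step 2: $\mu(H)\le \max_T\tau_H(T)$.} Take $F$ attaining $\mu(H)$ and set $T:=O_H(F)$, which is even. Then $F$ is a $T$-join satisfying the cycle condition, so by the criterion it is a minimum $T$-join. Hence $\mu(H)=\abs{F}=\tau_H(T)\le \max_T\tau_H(T)$.

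Connectivity of $H$ is used only to guarantee that a $T$-join exists for every even $T$, so that $\tau_H(T)$ is defined. The one step requiring care is the converse direction of the optimality criterion, namely the decomposition of $J\Delta J'$ into cycles and the edge count on each cycle. This is routine, because every even-degree edge set is an edge-disjoint union of cycles.
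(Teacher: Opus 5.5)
Your proposal is correct and follows essentially the same route as the paper. Both proofs rest on the cycle optimality criterion for minimum $T$-joins, proved by shrinking via $J\Delta E(C)$ and by comparing with an arbitrary $T$-join $J'$ after decomposing $J\Delta J'$ into edge-disjoint cycles. Your Steps 1 and 2, in particular taking $T:=O_H(F)$ for an optimal $F$, just make explicit the final deduction of the equality that the paper leaves implicit once the criterion is stated.
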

\begin{proof}
Let $H$ be a connected graph and let $T\subseteq V(H)$ be an even set. Assume that $J$ is a minimum $T$-join with respect to $|J|$. If there is a cycle $C$ in $H$ such that $\abs{J\cap E(C)}>\frac{e(C)}2$, then $J\Delta E(C)$ is a smaller $T$-join compared with $J$, contradicting the minimality. So every minimum $T$-join satisfies $\abs{J\cap E(C)}\le\frac{e(C)}2$. 
    
Conversely, let $T\subseteq V(H)$ be an even set. Assume that $J$ is a $T$-join such that $\abs{J\cap E(C)}\le\frac{e(C)}2$ for every cycle $C$ of $H$, and $J'$ is another $T$-join. Since $T=O_H(J)=O_H(J')$,  $J\Delta J'$ is an even-degree edge set and, hence, $H[J\Delta J']$ can be decomposed into edge-disjoint cycles $C_1,C_2,\cdots, C_s$. Since every edge in $C_i$ lies in exactly one of $J$ or $J'$, and $\abs{J\cap E(C_i)}\le\frac{e(C_i)}2$ for each $i$, we have that $\abs{J\cap E(C_i)}\leq \abs{J'\cap E(C_i)}$. Therefore, $\abs{J\setminus J'}\le \abs{J'\setminus J}$, and $J$ is a minimum $T$-join.

Therefore, for any $T\subseteq V(H)$ with $|T|$ even, $J$ is a minimum $T$-join if and only if $\abs{J\cap E(C)}\le\frac{e(C)}2$ for every cycle $C$ of $H$. We complete the proof. 
\end{proof}

The following corollary is a direct consequence of \Cref{lem:tjoin-fmax} and \Cref{lem:join-fmax}.

\begin{corollary}\label{col:fmaxeq}
For any connected plane graph $G$ and its dual graph $G^*$, $F_{\max}(G)=\mu(G^*).$
\end{corollary}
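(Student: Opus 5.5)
The corollary follows by chaining \Cref{lem:tjoin-fmax} and \Cref{lem:join-fmax}, applied to the dual graph $H:=G^*$. First I check that the hypotheses line up. Since $G$ is a connected plane graph, its dual $G^*$ is connected: any two faces of $G$ can be joined by a sequence of faces, consecutive ones sharing an edge, by crossing edges along a curve in the plane. So \Cref{lem:join-fmax} applies to $H=G^*$ and gives
$$\mu(G^*)=\max_T\{\tau_{G^*}(T): T\subseteq V(G^*),~|T|~\text{even}\}.$$

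Next, \Cref{lem:tjoin-fmax} says the right-hand side equals $F_{\max}(G)$. Combining the two identities gives $F_{\max}(G)=\mu(G^*)$.

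The one point to be careful about is that $G^*$ may have loops (duals of bridges) and parallel edges, while \Cref{lem:join-fmax} is stated for graphs in general. Its proof never uses simplicity. It only needs two facts: $J\Delta E(C)$ is again a $T$-join for any cycle $C$, and an even-degree edge set decomposes into edge-disjoint cycles. Both facts hold in multigraphs, where loops count as cycles of length $1$ and a pair of parallel edges forms a cycle of length $2$.

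With that convention, a loop $\ell$ in $G^*$ behaves correctly in both quantities. The cycle condition $|F\cap\{\ell\}|\le \frac12$ forces $\ell\notin F$. On the other side, a loop never lies in a minimum $T$-join, since removing it preserves all parities. So loops contribute nothing to either side, which is consistent with \Cref{obs:bridge}. Thus no step is a real obstacle; the only care needed is interpreting ``cycle'' in $G^*$ to include loops and digons.
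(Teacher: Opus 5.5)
Your proposal is correct and follows the paper's route exactly: the paper simply notes that the corollary is a direct consequence of chaining \Cref{lem:tjoin-fmax} with \Cref{lem:join-fmax} applied to $H=G^*$. Your extra checks are details the paper leaves implicit, and the loop convention in particular does matter. These checks are that $G^*$ is connected, and that loops and digons must be counted as cycles of lengths $1$ and $2$ in the definition of $\mu$. Without that convention, a single-edge $G$ would give $F_{\max}(G)=0$ but $\mu(G^*)=1$.
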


We now introduce the notion of ear decompositions following the formulation of Frank~\cite{F1993}. An \emph{ear decomposition} of a connected graph $G$, denoted by $\mathcal{E}_G$, is a sequence of graphs $G_0\subset G_1\subset \cdots \subset G_s$ such that $G_0$ consists of a single vertex and no edges, $G_s=G$, and for each $i\in\{1,2,\ldots,s\}$, the graph $G_i$ is obtained from $G_{i-1}$ by adding a path $P_i$ whose endpoints are vertices (which might be the same) in $G_{i-1}$ and whose internal vertices (if any) are disjoint from $V(G_{i-1})$. Each such path $P_i$ is called an \emph{ear} of the decomposition. Note that an ear is allowed to have length $1$. An ear is \emph{even} or \emph{odd} according to whether its length (i.e., its number of edges) is even or odd. A classical result of Robbins~\cite{R1939} states that a connected graph is $2$-edge-connected if and only if it admits an ear decomposition. For any $2$-edge-connected graph $G$, let $\varphi(G)$ denote the minimum number of even ears among all ear decompositions $\mathcal{E}_G$ of $G$. Frank~\cite{F1993} gave the following key relation between the two parameters $\mu(G)$ and $\varphi(G)$.

\begin{theorem}[Frank~\cite{F1993}]\label{thm:frank}
For any $2$-edge-connected graph $G$, $\varphi(G)=2\mu(G)-v(G)+1.$
\end{theorem}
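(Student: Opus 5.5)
The statement to be proved is Frank's identity
$$\varphi(G)=2\mu(G)-v(G)+1 .$$
I would prove the two inequalities $\mu(G)\le\frac{v(G)-1+\varphi(G)}{2}$ and $\mu(G)\ge\frac{v(G)-1+\varphi(G)}{2}$ separately. Throughout I use \Cref{lem:join-fmax}, so $\mu(G)=\max_T \tau_G(T)$. I also use the fact that a set $J$ of size $\mu(G)$ with $|J\cap E(C)|\le e(C)/2$ for every cycle $C$ is a minimum $T$-join for $T=O_G(J)$.

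\textbf{Upper bound on $\mu$.} Fix an ear decomposition $G_0\subset\cdots\subset G_s$ with $k$ even ears. I would show by induction on $i$ that for every even set $T\subseteq V(G_i)$ there is a $T$-join in $G_i$ of size at most $\frac{v(G_i)-1+k_i}{2}$, where $k_i$ is the number of even ears among $P_1,\dots,P_i$.

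For the step, let $P=P_i$ have length $\ell$, endpoints $u,w$ and $\ell-1$ new internal vertices. The parities required by $T$ at the internal vertices determine $J\cap E(P)$ up to complementation within $P$. Passing to the complement flips the parity contributed at both $u$ and $w$; when $u=w$ it flips nothing, since $P$ is then a cycle. Whichever of the two choices is taken, the residual requirement on $G_{i-1}$ is still an even set $T'$, so the induction hypothesis applies to it.

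Choosing the smaller of the two options on $P$ costs at most $\lfloor \ell/2\rfloor$ edges. This equals $\frac{\ell-1}{2}$ for an odd ear and $\frac{(\ell-1)+1}{2}$ for an even ear. Summing over all ears gives $\tau_G(T)\le \frac{v(G)-1+k}{2}$ for every even $T$. Taking $k=\varphi(G)$ yields $2\mu(G)\le v(G)-1+\varphi(G)$.

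\textbf{Lower bound on $\mu$.} Take $J$ attaining $\mu(G)$. Define a length function $l(e)=-1$ for $e\in J$ and $l(e)=+1$ otherwise. The cycle condition in the definition of $\mu$ says exactly that $l$ is conservative, i.e.\ every cycle has nonnegative length. For any ear $P$ of length $\ell$ we have
$$2|J\cap E(P)|=\ell-l(P).$$
It therefore suffices to build an ear decomposition in which every odd ear has $l(P)=1$ and every even ear has $l(P)=0$. Summing the identity above over such a decomposition gives $2\mu=\sum_P(\ell_P-l(P))=(v-1+k)$ with $k$ the number of even ears, hence $\varphi(G)\le 2\mu(G)-v(G)+1$.

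To build such a decomposition I would grow it greedily. Given a current subgraph $H$, look for a next ear $P$ whose length $l(P)$ is as small as possible, and argue that this minimum is at most $1$. Conservativeness alone forces $l(P)\ge 0$ for a closed ear, and the ear parity then pins $l(P)$ down to $0$ or $1$.

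For open ears the bound $l(P)\le1$ must come from the maximality of $|J|$. If every available ear had $l(P)\ge 2$, one would try to switch $J$ along a suitable cut, or symmetric-difference it along a path, to obtain a larger set that still satisfies the cycle condition. That would contradict the choice of $J$. Here I would use shortest-path potentials with respect to $l$ from a root in $H$, which exist because $l$ is conservative. This step, showing that maximality of $J$ always supplies a tight ear (in the spirit of Sebő's structure theory for conservative $\pm1$ weightings), is the main obstacle; the upper bound is routine.
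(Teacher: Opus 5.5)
The paper does not prove this statement: it quotes it from Frank and uses it as a black box, so your proposal has to stand on its own. Your first half is correct and complete. Building a $T$-join ear by ear works: there are two complementary choices on each ear, the cost is at most $\lfloor \ell/2\rfloor$, and the residual set $T'$ on $G_{i-1}$ stays even. This gives $\tau_G(T)\le \frac{v(G)-1+k}{2}$ for every even $T$, and Kwan's lemma turns it into $2\mu(G)\le v(G)-1+\varphi(G)$. Your reduction of the reverse inequality is also sound. With $l=-1$ on a maximum join $J$ and $l=+1$ elsewhere, it suffices to exhibit an ear decomposition in which every ear has $l(P)\le 1$. This bound is needed for closed ears too, including the very first ear. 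Conservativeness only gives $l(P)\ge 0$ there, so parity does not settle them. The existence of such a decomposition is the entire content of Frank's theorem, and you do not prove it.

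Moreover, the claim you intend to prove, that a greedy process minimizing $l(P)$ can always continue unless $J$ can be enlarged, is false.

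\emph{Counterexample.} Take $G=K_5$ on $\{a,b,c,d,e\}$ and $J=\{ab,cd\}$.
\begin{enumerate}
\item $J$ is a join, being a matching. It is maximum: $K_5$ has an odd ear decomposition, so $\mu(K_5)\le 2$ by your own upper bound.
\item Root the process at $a$. Every closed ear through $a$ with the minimum value $l=0$ is a $J$-alternating $4$-cycle on $\{a,b,c,d\}$, so your rule must pick one.
\item After that, the only ears with $l\le 1$ are the two chords of that $4$-cycle. Every ear reaching $e$ has the form $x\,e\,y$ with $l=2$.
\end{enumerate}
So the greedy process gets stuck although $J$ cannot be enlarged. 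It must get stuck: completing it would give $2|J|\ge v-1+k\ge 5>4$. Hence the root and the ears cannot be chosen greedily; they must be controlled globally. In this example, rooting at the $J$-exposed vertex $e$ does work.

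Your proposed tool is also unavailable. If $l(uv)=-1$, there is no potential $\pi$ with $|\pi(u)-\pi(v)|\le l(uv)$. Undirected shortest-path distances also violate the triangle inequality across negative edges. So the directed-style ``switch along a cut'' argument does not transfer.

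A correct extension argument is the genuinely hard part of Frank's theorem; his paper develops structural results on conservative $\pm1$-weightings for exactly this purpose. Without it, $\varphi(G)\le 2\mu(G)-v(G)+1$ remains unproved.
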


Next we consider the duals of the plane graphs appearing in an ear decomposition of a $2$-edge-connected plane graph $G$.
\emph{Splitting a vertex $v$} is an operation that replaces $v$ with two new vertices $v_1$ and $v_2$, and assigns each edge incident with $v$ to exactly one of $v_1$ and $v_2$. For any face $F$ of a plane graph $G$, we use $v_F$ to denote the corresponding vertex in $G^*$.

\begin{observation}\label{obs:addear}
Let $G$ be a $2$-edge-connected plane graph and let $F_0$ be a face of $G$. Let $G'$ be formed from $G$ by adding a path $P$ of length $p$ with the two endpoints on the vertex set of $F_0$. Then the dual graph $(G')^*$ of $G'$ is obtained from $G^*$ by splitting $v_{F_0}$ into $v^1_{F_0}$ and $v^2_{F_0}$, and adding $p$ parallel edges connecting $v^1_{F_0}$ and $v^2_{F_0}$.
\end{observation}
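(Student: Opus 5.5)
We prove the observation by following the faces of $G'$ one at a time and reading off the incidences of every edge, since both the vertices and the edges of the dual are fixed by the embedding. Suppose the path $P$ has endpoints $u,w$ (possibly $u=w$) on the boundary of $F_0$, and let its edges be $e_1,\dots,e_p$. The internal vertices of $P$ are new, and $P$ is drawn inside $F_0$. Because $G$ is $2$-edge-connected, its face boundaries are closed walks. The curve $P$ starts and ends on the boundary of $F_0$ and otherwise lies in the interior of $F_0$, so by the Jordan curve theorem it splits $F_0$ into exactly two faces $F_0^1$ and $F_0^2$. When $u=w$, $P$ is a closed curve through $u$, and it again separates $F_0$ into an inside region and an outside region. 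Every face $F\neq F_0$ of $G$ is unchanged in $G'$. So the faces of $G'$ are those of $G$ other than $F_0$, together with $F_0^1$ and $F_0^2$. Accordingly, the vertex set of $(G')^*$ is $V(G^*)\setminus\{v_{F_0}\}$ together with the two new vertices $v^1_{F_0}$ and $v^2_{F_0}$.

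Next we treat the edges. An edge $e$ of $G$ not on the boundary of $F_0$ has the same two incident faces in $G'$ as in $G$, so $e^*$ is unchanged. Now let $e$ be an edge of $G$ on the boundary of $F_0$, with incident faces $F_0$ and $F$. The boundary walk of $F_0$ is cut at $u$ and $w$ into two subwalks, and the boundary of $F_0^i$ consists of the $i$-th subwalk together with $P$. Hence each side of $e$ facing $F_0$ now faces exactly one of $F_0^1$ and $F_0^2$, and $e^*$ becomes an edge joining $v_F$ to $v^1_{F_0}$ or to $v^2_{F_0}$. If both sides of $e$ face $F_0$, each side goes to one of the two new faces and the same reasoning applies to each end of $e^*$. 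Thus every edge of $G^*$ at $v_{F_0}$ is reassigned to exactly one of $v^1_{F_0}$ and $v^2_{F_0}$, which is precisely splitting $v_{F_0}$. Finally, each edge $e_j$ of $P$ has $F_0^1$ on one side and $F_0^2$ on the other, so $e_j^*$ joins $v^1_{F_0}$ and $v^2_{F_0}$. This gives the $p$ parallel edges.

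The one step that needs care is showing that $F_0^1$ and $F_0^2$ are distinct faces. This is what rules out loops: if they coincided, some $e_j^*$ would be a loop, meaning $e_j$ is a bridge. But $e_j$ lies on a cycle formed by $P$ and a $u$--$w$ path along the boundary of $F_0$, or by $P$ alone when $u=w$, so $G'$ is $2$-edge-connected and no $e_j$ is a bridge. The remaining degenerate case is when the boundary walk of $F_0$ passes through $u$ several times. It is handled by taking as the endpoint of $P$ the specific corner (angle) of $F_0$ at $u$ into which $P$ is drawn. The subwalks are then defined using these corners, and the argument above goes through unchanged.
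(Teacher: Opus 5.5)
Your proposal is correct. The paper states this as an observation without proof, treating it as an immediate consequence of the definition of the dual, and your face-by-face and edge-by-edge verification is the routine check it leaves implicit; in particular, your argument that no edge of $P$ is a bridge of $G'$ correctly shows that the two new faces are distinct, so no loops arise. The ``exactly two faces'' step, which your Jordan-curve sketch handles only loosely when $u\neq w$, also follows from Euler's formula: $G'$ is connected and has $p-1$ new vertices and $p$ new edges, so $f(G')=f(G)+1$.
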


The next lemma is the final ingredient in the proof of \Cref{thm:fvs}. The original proof is attributed to Kir\'aly and Kisfaludi-Bak~\cite{KK2012} using the notion of dual-critical graphs. Here, we provide a direct inductive proof based on ear decompositions.

\begin{lemma}\label{lem:key}
Let $G$ be a $2$-edge-connected plane graph and let $G^*$ be its dual graph.  Let $\mathcal{E}_{G^*}$ be an ear decomposition of $G^*$ with $k$ even ears. There exists an edge subset $D\subseteq E(G)$ with $|D|=k$ such that $G-D$ has a spanning tree $T$ satisfying that $E(G-D)\setminus E(T)$ can be partitioned into pairs of incident edges.
\end{lemma}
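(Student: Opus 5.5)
We argue by induction on the number $s$ of ears in $\mathcal{E}_{G^*}$, reading the decomposition of $G^*$ backwards in $G$ by means of \Cref{obs:addear}. Since $G$ is $2$-edge-connected, $G^*$ is loopless. Under planar duality, adding an ear $P$ of length $p$ to $G^*_{i-1}$ corresponds to the following operation on the primal side. Deleting the edges of $P$ from the current dual reverses the step of adding a path. In the primal, this means contracting $P^*$ if $P$ has distinct endpoints, since the internal vertices of $P$ have degree $2$ and are faces bounded by two edges. If $P$ is a closed ear, we instead delete a cycle-bounded region. In the primal, the $p$ edges of $P^*$ form a path of length $p$ whose internal vertices are new. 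Concretely, an ear with distinct endpoints in the dual corresponds, in the primal, to subdividing a vertex (uncontracting) into a path of $p$ edges. A closed ear corresponds to adding a cycle of length $p$ through a vertex, i.e. $p$ edges forming a new "bubble". I would set this up as a primal-side inductive statement: for the plane graph $H_i$ with $H_i^*=G_i$, there is $D_i\subseteq E(H_i)$ with $|D_i|$ equal to the number of even ears among the first $i$, a spanning tree $T_i$ of $H_i-D_i$, and a partition of the non-tree edges into incident pairs.

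The base case is $G_0$, a single vertex with no edges. Its primal is a single vertex with no edges (one face), so $D_0=\emptyset$ and $T_0$ is trivial. For the inductive step, I distinguish the two kinds of ear. Suppose first the ear $P_i$ of length $p$ has distinct endpoints. Then $H_i$ is obtained from $H_{i-1}$ by splitting a vertex $v$ into two vertices joined by a path $Q$ of length $p$, whose internal vertices have degree $2$. The edges of $H_{i-1}$ at $v$ are distributed between the two ends of $Q$, and all vertices of $Q$ are new. We extend $T_{i-1}$ by adding all of $Q$, which gives a spanning tree, since contracting $Q$ returns $T_{i-1}$. The old pairs stay incident, because an old pair at $v$ may now share an end only if both edges go to the same split copy. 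This is the delicate point. To avoid it, I would instead keep $Q$ mostly outside the tree: put only one edge of $Q$, or a suitable subset, into the tree and pair up the rest. Since $Q$ is a path, consecutive edges of $Q$ are incident. If $p$ is odd, put one edge in the tree (say the middle one, or an end one) and pair the remaining $p-1$ edges consecutively. If $p$ is even, put one edge into $D$ and proceed as before. The problem is that removing interior tree edges disconnects the interior vertices of $Q$, so every vertex of $Q$ must still be spanned.

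Reconsider this accordingly: the tree needs all $p+1$ vertices of $Q$ connected. With only one tree edge of $Q$ this fails, so the correct correspondence must be different. By \Cref{obs:addear}, adding a primal path of length $p$ corresponds to $p$ parallel dual edges. Hence I run the induction in the other direction: take an ear decomposition of $G$ itself. Let $\mathcal{E}_{G^*}$ arise from a decomposition for which, in the primal, each ear $P_i$ of $G^*$ with $p$ edges corresponds to $p$ primal edges forming a path of length $p$ and contracted faces. The natural bookkeeping is that $p-1$ edges of the ear are consumed by the new vertices (tree edges) and the remaining edges must be paired. Odd ears give an even remainder, which is paired consecutively; even ears leave one edge, which goes into $D$. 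The hard step is verifying that the paired edges are genuinely incident in $G$ after the later ears split vertices. For this I would choose the pairs to share a degree-$2$ internal vertex of the primal path, which is never split later: internal vertices correspond to faces of length $2$ that later ears cannot subdivide. Some care with closed ears (loops at the splitting vertex) is needed, and I expect this to be the main obstacle.
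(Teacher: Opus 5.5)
There is a genuine gap, and it starts with the primal operation. Adding a dual ear $P_i$ of length $p_i$, open or closed, inside a face of $G^*_{i-1}$ cuts that single face into two. So by \Cref{obs:addear}, $G_i$ arises from $G_{i-1}$ by splitting one vertex $z$ into $z_1,z_2$ and joining them by $p_i$ parallel edges. Exactly one primal vertex is created per ear. The internal vertices of $P_i$ are digon faces of $G_i$, not primal vertices, and closed ears need no separate treatment. Your path-of-length-$p$ model contradicts this, and also your own remark about faces bounded by two edges. Your fallback of using an ear decomposition of $G$ itself abandons the hypothesis, since $|D|$ must equal the number $k$ of even ears of the \emph{given} decomposition of $G^*$. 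With the correct model, your idea of sending one edge to $D$ per even ear is right. It leaves an odd number $q$ of parallel edges: one joins the new vertex to the tree, and the other $q-1$ are paired.

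The real obstacle is the one you postpone: later splits destroy earlier structure. When $z$ is split, the edges at $z$ are distributed between $z_1$ and $z_2$ by the embedding. An old pair $\{e,f\}$ meeting at $z$ may therefore end up with $e$ at $z_1$ and $f$ at $z_2$, no longer incident. Your remedy, pairs meeting at a degree-$2$ vertex that is never split, is unavailable for three reasons. No such primal vertices exist. Both $z_1$ and $z_2$ are faces of $G^*_i$ that later ears may cut. Even internal vertices of an ear may serve as endpoints of later ears.

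The missing idea is to maintain not the pairs but a split-robust invariant: an ordering of $V(G_i-D_i)$ in which every vertex except the first has an odd number of edges to earlier vertices. Suppose $z$ is split.
\begin{itemize}
\item If $z$ was first, order $z_1,z_2$; then $z_2$ has exactly $q$ back-edges, an odd number.
\item Otherwise, the odd number of back-edges of $z$ divides so that exactly one copy receives an odd share. Place that copy first. The other copy has an even share plus the $q$ parallel edges, hence an odd number of back-edges.
\end{itemize}
All other vertices keep their counts. Only for the final ordering does one pick one back-edge per vertex to form $T$, and then pair the remaining back-edges at each vertex, of which there are evenly many.
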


\begin{proof}
Let $G$ be a $2$-edge-connected plane graph and let $G^*$ be its dual graph. It is straightforward that $G^*$ is also a $2$-edge-connected plane graph. By Robbins' result~\cite{R1939}, $G^*$ admits an ear decomposition $G^*_0\subset G^*_1\subset\cdots \subset G^*_s$ containing $k$ even ears. For each $i\in \{1,\ldots, s\}$, assume that $G^*_i$ is obtained from $G^*_{i-1}$ by adding an ear $P_i$ of length $p_i$. We may assume that the planar embedding of $G_i^*$ is chosen to extend that of $G^*_{i-1}$. For each $i\in \{0,1,\ldots, s\}$, let $G_i$ denote the dual graph of $G^*_i$, and $k_i$ denote the number of even ears in the partial ear decomposition $G^*_0\subset \cdots \subset G^*_i$ of $G^*_i$. Clearly, $k_0\leq k_1\leq \cdots \leq k_s=k$.

We prove by induction on $i$ that there exists a set $D_i\subseteq E(G_i)$ with $|D_i|=k_i$ such that $G_i-D_i$ has a spanning tree $T_i$ satisfying that
$E(G_i-D_i)\setminus E(T_i)$ can be partitioned into pairs of incident edges.

The graph $G^*_0$ consists of one vertex and no edges, and so does $G_0$. Thus, the assertion holds for the case $i=0$ with $D_0=\emptyset$. Suppose that the statement holds for $i-1$. Since $G^*_i$ is obtained from $G^*_{i-1}$ by adding an ear $P_i$, by~\Cref{obs:addear}, $G_i$ is obtained from $G_{i-1}$ by splitting a vertex $z$ into two vertices $z_1$ and $z_2$, followed by adding $p_i$ parallel edges between $z_1$ and $z_2$. We identify $z_1$ with $z$, and thus regard $z_2$ as the new vertex in $G_i$. 



We first define $D_i$ inductively. Set $D_0=\emptyset$. For each $i\ge 1$, if $p_i$ is odd, then $k_i=k_{i-1}$ and we set
$D_i:=D_{i-1}$. If $p_i$ is even, then $k_i=k_{i-1}+1$, and we choose an arbitrary edge $e_i$ connecting $z_1$ and $z_2$, and set $D_i:=D_{i-1}\cup\{e_i\}$. In both cases, $|D_i|=k_i$.

Note that $G_i-D_i$ is obtained from $G_{i-1}-D_{i-1}$ by splitting $z$ into $z_1,z_2$ and adding an odd number of parallel edges between them. Indeed, if $p_i$ is even, then there are $p_i-1$ parallel edges between $z_1$ and $z_2$ in $G_i-D_i$; if $p_i$ is odd, then there are $p_i$ parallel edges between $z_1$ and $z_2$ in $G_i-D_i$.  
In order to construct the corresponding $T_i$ for each $i$, we prove the following claim.

\begin{claim}\label{claim:order}
For each $i\in \{0,1,\ldots, s\}$, the graph $G_i-D_i$ admits a vertex ordering $\mathcal{O}_i=(v_1, v_2, \ldots, v_{n_i})$ such that for every $\ell\geq 2$, the number of edges joining $v_\ell$ to $\{v_1,\ldots,v_{\ell-1}\}$ is odd.  
\end{claim}
\begin{proof*}[Proof of \Cref{claim:order}]
We prove the statement by induction on $i$. The case $i=0$ is trivial. Assume that the statement holds for $i-1$, and let $\mathcal{O}_{i-1}$ be a vertex ordering of $G_{i-1}-D_{i-1}$ satisfying the required condition. In particular, the vertex $z$ to be split, either is the first vertex in the ordering, or has an odd number of incident edges to its earlier neighbors in $\mathcal{O}_{i-1}$.

We construct $\mathcal{O}_i$ from $\mathcal{O}_{i-1}$ as follows. If $z$ is the first vertex in $\mathcal{O}_{i-1}$, we replace $z$ by $z_1,z_2$. If $z_1$ is incident with an odd number of edges joining it to the earlier neighbors of $z$, then we replace $z$ in $\mathcal{O}_{i-1}$ by $z_1,z_2$. Otherwise, we replace $z$ by $z_2,z_1$.
We claim that $\mathcal{O}_i$ is a required vertex ordering of $G_i-D_i$. Indeed, recall that $z_1$ and $z_2$ are connected by an odd number of parallel edges in $G_i-D_i$. Exactly one of $z_1$ and $z_2$ is incident with an odd number of edges joining it to the earlier neighbors of $z$, and the other vertex is incident with an even number of such edges. We complete the proof of the claim.
\end{proof*}

For each $i\in \{0,1,\ldots, s\}$, based on $\mathcal{O}_i$, we construct a spanning tree $T_i$ of $G_i-D_i$ as follows. For every $\ell\geq 2$, choose one edge $e_\ell$ joining $v_\ell$ to one of its earlier neighbors in $\mathcal{O}_i$. Since every $v_\ell$ has at least one earlier neighbor, the chosen edges form a connected graph with $n_i-1$ edges, and hence constitute a spanning tree of $G_i-D_i$.

Moreover, when restricted to $G_i-(D_i\cup E(T_i))$, every vertex $v_\ell$ with $\ell\ge2$ has an even number of edges joining it to earlier vertices in the ordering $\mathcal{O}_i$. Therefore, these edges can be paired arbitrarily at each vertex $v_\ell$, yielding a partition of $E(G_i-D_i)\setminus E(T_i)$ into pairs of incident edges.
\end{proof}

By \Cref{obs:bridge}, we may assume that $G$ is $2$-edge-connected, and thus its dual graph $G^*$ is also $2$-edge-connected. Now we can prove \Cref{thm:fvs}.

\begin{proof}[Proof of~\Cref{thm:fvs}]
  Since $G$ is $2$-edge-connected, $G^*$ is a $2$-edge-connected plane graph. Let $\mathcal{E}_{G^*}$ be an ear decomposition of $G^*$ with exactly $\varphi(G^*)$ even ears. By~\Cref{lem:key}, there exists $D\subseteq E(G)$ with $\abs{D}=\varphi(G^*)$ such that $G-D$ admits a spanning tree $T$ satisfying that $E(G-D)\setminus E(T):=R$ can be partitioned into pairs of incident edges.
  
  We form a feedback vertex set $S$ as follows. For every edge $e\in D$, choose one vertex of $e$; for every incident edge pair $\{e_1,e_2\}$ of $R$, choose a common endpoint of $e_1$ and $e_2$. Let $S$ be the set of all chosen vertices. Since every edge in $E(G)\setminus E(T)$ has an endpoint in $S$, $G-S\subseteq T$. Consequently, $G-S$ is a forest and $S$ is a feedback vertex set. It is easy to see that $|S|\le\frac{|R|}{2}+|D|$. 
  
  Since $T$ is a spanning tree of $G-D$, $|R|=e(G)-\abs{D}-(v(G)-1)=e(G)-v(G)+1-\varphi(G^*).$ Therefore, $$\mathrm{fvs}(G)\le |S|\le\frac{|R|}{2}+|D|=\frac{e(G)-v(G)+1+\varphi(G^*)}{2}=\frac{v(G^*)-1+\varphi(G^*)}{2}=\mu(G^*),$$ where the second last equality follows from the planarity of $G$ and Euler's formula (namely, $e(G)-v(G)+1=v(G^*)-1$), and the last equality follows from~\Cref{thm:frank}. By \Cref{col:fmaxeq}, $\mu(G^*)=F_{\max}(G)$, thus $\mathrm{fvs}(G)\le F_{\max}(G)$, which completes the proof.
\end{proof}

\section*{Declaration of AI usage}
During the development and preparation of this manuscript, the authors used ChatGPT on a limited basis to explore possible approaches to selected parts of the mathematical arguments (namely, locating the relevant arguments and the proof of \Cref{lem:key} in~\cite{KK2012}, and identifying the reference~\cite{F1993}), and to improve the language and presentation of the paper. All AI-assisted arguments were carefully checked, revised, and independently verified by the authors. Except for these limited uses, all proofs in the paper and their presentation were carried out by the authors, who take full responsibility for the correctness and final content of the manuscript.


{\footnotesize
\bibliographystyle{amsplain}

}

\end{document}